\definecolor{LinkColor}{rgb}{0,0,0} 
\newtheorem{theorem}{Theorem}[section]
\newtheorem{lemma}[theorem]{Lemma}
\newtheorem{proposition}[theorem]{Proposition}
\theoremstyle{definition}
\newtheorem{remark}[theorem]{Remark}
\newcommand{\Out}{\operatorname{Out}}
\newcommand{\Tr}{\operatorname{Tr}}
\newcommand{\V}{\textup{V}}
\newcommand{\C}{\textup{C}}
\newcommand{\N}{\textup{N}}
\newcommand{\ZZ}{\mathbb{Z}}
\newcommand{\QQ}{\mathbb{Q}}
\renewcommand{\O}{\operatorname{O}}
\title{An application of blocks to torsion units in group rings}
\author[A.~B\"achle]{Andreas B\"achle}
\address{Vakgroep Wiskunde, Vrije Universiteit Brussel, Pleinlaan 2, 1050 Brussels, Belgium.}
\email{\href{mailto:abachle@vub.be}{abachle@vub.be}, \href{mailto:leo.margolis@vub.be}{leo.margolis@vub.be}}
\author[L.~Margolis]{Leo Margolis}
\keywords{Integral group ring, Prime Graph Question, cyclic defect, symmetric groups}
\subjclass[2010]{16U60, 20C05, 20C20}
\thanks{Both authors are postdoctoral researchers of the Research Foundation Flanders (FWO - Vlaanderen).}
\begin{document}

\maketitle

\begin{flushright}
\textit{One may dare to ask whether the theory of cyclic blocks can \\ provide additional
insight into Zassenhaus’ conjecture (ZC1). \\ We have no clue, but...}
\footnote[1]{from V. Bovdi, M. Hertweck, \textit{Zassenhaus Conjecture for central extensions of $S_5$}. Here (ZC1) denotes the First Zassenhaus Conjecture.}
\end{flushright}

\begin{abstract} We use the theory of blocks of cyclic defect to prove that under a certain condition on the principal $p$-block of a finite group $G$, the normalized unit group of the integral group ring of $G$ contains an element of order $pq$ if and only if so does $G$, for $q$ a prime different from $p$. 
Using this we verify the Prime Graph Question for all alternating and symmetric groups and also for two sporadic simple groups.  
\end{abstract}

\section{Introduction}

The influence of the structure of a finite group $G$ on the arithmetic properties of the unit group of the corresponding integral group ring $\mathbb{Z}G$ has given rise to many intriguing theorems and problems. To distinguish the arithmetic influenced by the group $G$ from that coming from the coefficient ring $\mathbb{Z}$, define $\V(\ZZ G)$ to be the group of units in $\mathbb{Z}G$ whose coefficients sum up to $1$. These are also called normalized units.
Such an influence is revealed, for example, if the orders of torsion elements in $\V(\mathbb{Z}G)$ coincide with the orders of elements in $G$. The \emph{Spectrum Problem}, which remains unanswered, asks whether this is indeed always the case. This fascinating problem is known to have a positive answer for solvable groups \cite{HertweckOrders}, but remains open in general. A stronger version of the Spectrum Problem, the First Zassenhaus Conjecture, was the focus of research for several decades, but recently first counterexamples were found \cite{EiseleMargolis17}. 

A graph encoding basic arithmetic properties of a (possibly infinite) group $A$ is the \emph{prime graph} $\Gamma(A)$: The vertices of $\Gamma(A)$ are labeled by the primes appearing as orders of elements in $A$ and two vertices $p$ and $q$ are connected by an edge if and only if $A$ contains an element of order $pq$. It follows from a fundamental theorem of G. Higman \cite[Theorem~12]{Higman1940Thesis} that the vertices of $\Gamma(G)$ and $\Gamma(\V(\ZZ G))$ coincide for each finite group $G$. Cohn and Livingstone proved that even the exponents of $G$ and $\V(\ZZ G)$ coincide \cite[Corollary 4.1]{CohnLivingstone}. The question if also the edges of the graphs coincide was put forward by Kimmerle \cite{Kimmerle2006}.  \\ 
 
\noindent\textbf{Prime Graph Question:}  For a finite group $G$, does $\Gamma(G) = \Gamma(\V(\ZZ G))$ hold?\\

In contrast to other problems concerning units in group rings there is a reduction result for the Prime Graph Question \cite[Theorem~2.1]{KimmerleKonovalov2016}:\\

\noindent
\textbf{Reduction:} The Prime Graph Question has a positive answer for $G$ if it has a positive answer for all almost simple images of $G$.\\
 
 Recall that a group $A$ is called \emph{almost simple} if it is sandwiched between a non-abelian simple group $S$ and its automorphism group, i.e., $S \simeq \operatorname{Inn}(S) \leq A \leq \operatorname{Aut}(S)$. In this case $S$ is called the socle of $A$.

Denote by $A_n$ and $S_n$ the alternating and symmetric group of degree $n$, respectively. In this note we prove the following.

\begin{theorem}\label{th:AltSym}
The Prime Graph Question has a positive answer for any almost simple group with socle isomorphic to some $A_n$. In particular, it has a positive answer for alternating and symmetric groups.
\end{theorem}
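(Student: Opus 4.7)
The plan is to combine the Kimmerle--Konovalov reduction with the paper's main theorem on principal $p$-blocks of cyclic defect, supplemented by ad hoc arguments for small degrees. By the reduction, it suffices to verify the Prime Graph Question for each almost simple group $G$ with socle $A_n$. For $n\neq 6$ these are only $A_n$ and $S_n$, while for $n=6$ one must additionally treat $\mathrm{PGL}_2(9)$, $M_{10}$ and $P\Gamma L_2(9)$; these can be dispatched by separate direct computations, likely available in the literature or accessible by the HeLP method.

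Next I analyse which edges are missing in $\Gamma(A_n)$ and $\Gamma(S_n)$. An element of order $pq$ (distinct primes) exists in $S_n$ precisely when $p+q\le n$, and in $A_n$ precisely when $p+q\le n$ (both $p,q$ odd) or when $q+4\le n$ (if $p=2$, $q$ odd), by taking disjoint cycles of appropriate parities. Hence a missing edge $\{p,q\}$ with $p<q$ forces $q>n/2$ in essentially all cases, so the Sylow $q$-subgroup $Q$ of $G$ is cyclic of order $q$ and the principal $q$-block of $G$ has cyclic defect. This is exactly the setting of the main theorem of this paper.

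The crux is then to verify, uniformly in $n$, the condition that the main theorem imposes on the principal $q$-block, and conclude that $\V(\ZZ G)$ contains an element of order $pq$ if and only if $G$ does. Since $G$ has no such element by construction, neither does $\V(\ZZ G)$, and the missing edge stays missing. This step relies on the classical description of $\N_G(Q)$ for $q>n/2$: it is the normalizer of a single $q$-cycle, a cyclic-by-cyclic group whose Brauer-tree and character-theoretic data are transparent enough to read off the required hypothesis by hand.

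Finally, several residual issues remain. For small $n$ the ``large prime'' argument degenerates, since non-edges may involve $q\le n/2$ where $Q$ is no longer cyclic; here one reduces to a finite check, handled by the HeLP method on the ordinary character table or by citing existing verifications for specific $A_n$ and $S_n$. Particular care is needed for $p=2$ in $A_n$, since the parity gap between $q+4\le n$ and $p+q\le n$ introduces an extra band of non-edges only present in $\Gamma(A_n)$ and may require a separate verification of the main theorem's hypothesis. I expect the uniform verification of that hypothesis, and especially the parity-delicate $p=2$ cases in $A_n$, to be the main technical obstacle.
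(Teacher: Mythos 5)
Your outline matches the paper's strategy at a high level: reduce to the almost simple groups $A_n$ and $S_n$ (plus the sporadic $n=6$ cases), observe that any missing edge $\{p,q\}$ must involve a prime $q > n/2$, so the Sylow $q$-subgroup is cyclic of order $q$, and then invoke Theorem~\ref{prop:Brauer_Line}. The paper likewise dispatches the $n\le 17$ cases (in particular $n=6$) by citing \cite[Theorem~5.1]{BC}, and it also checks the one-conjugacy-class hypothesis explicitly (for $A_n$ in the residual cases $n=p+2$ or $n=p+3$, a $p$-cycle has centralizer meeting odd permutations, so the $S_n$-class does not split).

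The genuine gap is in the step you yourself flag as ``the main technical obstacle'': you propose to read off the Brauer-tree hypothesis from the structure of $\N_G(Q)$, but the shape of the Brauer tree of the principal block of $G$ is not determined by the normalizer alone (for the normalizer the tree is always a star, and passing to $G$ by Green correspondence does not give the tree shape for free). The paper's actual mechanism is a rationality argument: symmetric groups are rational groups, and by \cite[VII, Theorem~9.2]{FeitRep} a cyclic-defect block all of whose ordinary characters are rational has a Brauer tree that is a straight line. For $A_n$ the same conclusion is extracted via Feit's notion of admissible automorphisms of Brauer trees \cite[Lemma~3.1 and p.45]{FeitBrauerTrees}: the outer automorphism of $A_n$ fixes the trivial character, hence fixes every character in the principal $p$-block, so those characters are again rational and the tree is a line. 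Without this rationality input your proof does not close; with it, your ``parity-delicate $p=2$ cases in $A_n$'' cease to be an obstacle, since in the only surviving cases ($q=2$, $n=p+2$ or $p+3$) the hypotheses of Theorem~\ref{prop:Brauer_Line} are verified exactly as above.
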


This will be a consequence of a more general result which states that the Prime Graph Question can be locally answered around the vertex $p$ assuming the principal $p$-block of $G$ lies in a particular Morita equivalence class. 

\begin{theorem}\label{prop:Brauer_Line} Let $G$ be a finite group and let $p$ be an odd prime. Assume that a Sylow $p$-subgroup of $G$ is of order $p$. Assume further there is exactly one conjugacy class of $p$-elements in $G$ and that the Brauer tree of the principal $p$-block of $G$ is a line.

If $q$ is a prime different from $p$, then $G$ contains an element of order $pq$ if and only if $\V(\ZZ G)$ does.
\end{theorem}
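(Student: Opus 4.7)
Only the forward implication requires proof, since $G \subseteq \V(\ZZ G)$. Assume for contradiction that $u \in \V(\ZZ G)$ has order $pq$ while $G$ contains no element of order $pq$; the plan is to derive a contradiction via the HeLP (Hertweck--Luthar--Passi) method, using the line shape of the Brauer tree as the key combinatorial input. Write $u = u_p u_q$ with commuting $p$- and $q$-parts. By Hertweck's theorem on $p$-power torsion units combined with the unique-$p$-class hypothesis, $\varepsilon_{C_p}(u_p) = 1$, so $u_p$ is rationally conjugate to a fixed generator $g$ of a Sylow $p$-subgroup $P$. The Berman--Higman theorem, Hertweck's restriction on the support of partial augmentations, and the no-$pq$-element hypothesis together confine the partial augmentations of $u$ to the class $C_p$ and the $q$-element classes; set $a := \varepsilon_{C_p}(u)$.

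Next I would exploit the line Brauer tree. Label the vertex characters along the line $\chi_0, \chi_1, \ldots, \chi_e$ (absorbing any exceptional vertex at an endpoint) and the edge characters $\varphi_1, \ldots, \varphi_e$ so that $\varphi_i$ joins $\chi_{i-1}$ to $\chi_i$. On $p$-regular classes each $\chi_i$ decomposes as $\varphi_i + \varphi_{i+1}$, with the conventions $\varphi_0 = \varphi_{e+1} := 0$. Hence the virtual character $\Psi := \sum_{i=0}^{e}(-1)^i \chi_i$ telescopes to zero on every $p$-regular class of $G$.

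Now evaluate $\Psi(u)$ in two ways. Via partial augmentations, $\Psi(u) = \sum_C \varepsilon_C(u)\Psi(C) = a\,\Psi(g)$, since the $q$-classes are $p$-regular and contribute nothing. Via generalized decomposition with respect to $u_p \sim g$, we have $\chi_i(u) = \sum_\varphi d^{g}_{\chi_i,\varphi}\,\varphi(u_q)$, where the generalized decomposition numbers $d^{g}_{\chi_i,\varphi_j}$ are non-zero only for $j \in \{i, i+1\}$ and take the explicit form $\pm \zeta_p^k$ dictated by the Brauer tree's planar embedding. Regrouping $\sum_i (-1)^i \chi_i(u)$ by Brauer character then expresses $\Psi(u)$ as a sum $\sum_j T_j\,\varphi_j(u_q)$, where each $T_j$ is an alternating pair of generalized decomposition numbers attached to the two endpoints of the edge $\varphi_j$. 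Matching both evaluations yields an explicit identity
\[ a\,\Psi(g) = \sum_j T_j\,\varphi_j(u_q). \]

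Finally, substituting the known values of $\Psi(g)$ and of the $T_j$ (which for a line tree collapse to a short and well-controlled expression) and combining with the Luthar--Passi integrality and non-negativity constraints on eigenvalue multiplicities of $u$ on irreducible representations in $B_0$ should force $a = 0$ and in turn a contradiction with $\sum_C \varepsilon_C(u) = 1$ on the $q$-classes plus the requirement that $u$ actually have order $pq$. The main obstacle is the precise execution of the generalized-decomposition computation for the line Brauer tree, including the bipartite sign pattern and the delicate contribution of an exceptional vertex of multiplicity $(p-1)/e$; once the explicit form of $T_j$ and $\Psi(g)$ is in hand, the HeLP contradiction is standard.
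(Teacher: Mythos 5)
The heart of your proposal is the identity
\[
\chi_i(u) \;=\; \sum_{\varphi} d^{g}_{\chi_i,\varphi}\,\varphi(u_q),
\]
i.e., that the generalized decomposition relation for a $p$-element $g$ and a $p'$-element $h\in\C_G(g)$ carries over verbatim to a torsion unit $u\in\V(\ZZ G)$ with $p$-part rationally conjugate to $g$. This step is not justified, and it is precisely the obstacle the paper is designed to get around. The decomposition relation is a statement about class functions evaluated at group elements $gh$; there is no $a\ priori$ reason that a unit $u$ of order $pq$ should satisfy it, because the $q$-part $u_q$ does not live in $\V(\ZZ\C_G(g))$ and the equality of character values is exactly what one would like to conclude, not assume. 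The paper's substitute is Theorem~\ref{prop:ineqmultiplicities} (taken from \cite{MargolisConway}), which delivers only an \emph{inequality}
\[
\sum_{i=1}^{p}(-1)^i\mu(\xi\zeta_p,u,\chi_i)\ \le\ \mu(\xi,u,\chi_1)
\]
between eigenvalue multiplicities, obtained from the structure of $\ZZ G$-lattices in a block of cyclic defect. Replacing this inequality by an assumed exact decomposition equality is a genuine gap, not a detail to be filled in.

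There are further, smaller discrepancies. Your target contradiction is to force $a=\varepsilon_{C_p}(u)=0$ against the partial-augmentation congruences; the paper instead shows $\nu(g)=0$ for every $g\in G$ (handling $p$-singular classes via the Brauer--Glauberman result, Theorem~\ref{th:Glauberman}) and contradicts linear independence of the ordinary irreducibles. Even granting your equality, it is not clear the single scalar identity $a\,\Psi(g)=\sum_j T_j\varphi_j(u_q)$ pins down $a$; the paper needs \emph{two} inequalities (from $\xi=1$ and $\xi=\zeta_q$) plus the congruences $\varepsilon_y(u)\equiv0\bmod p$, $\varepsilon_y(u)\equiv1\bmod q$, and $\nu(y)\equiv0\bmod p$ to squeeze $\nu(y)$ to zero, and even then it must exclude $(p,q)=(3,2)$ by a separate citation, a case your argument does not address. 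Finally, your worry about an exceptional vertex of multiplicity $(p-1)/e$ is moot under the hypotheses: one conjugacy class of $p$-elements forces the inertial index $e=p-1$, so the exceptional multiplicity is $1$ and the block has exactly $p$ ordinary and $p-1$ Brauer characters; this also gives the $p$-rationality that the paper's argument uses.
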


Yet Theorem~\ref{prop:Brauer_Line} suits perfectly for symmetric and alternating groups, we will see that it alone is also sufficient to obtain a result on the Prime Graph Question for two sporadic simple groups.

\begin{theorem}\label{th:Fi22HN}
The Prime Graph Question has a positive answer for any almost simple group that has the smallest of the sporadic Fischer groups $Fi_{22}$ or the Harada-Norton group $HN$ as socle. 
\end{theorem}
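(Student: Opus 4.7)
To establish the Prime Graph Question for the almost simple groups with socle $Fi_{22}$ or $HN$, we need to handle the four groups $G \in \{Fi_{22},\, Fi_{22}.2,\, HN,\, HN.2\}$, where $Fi_{22}.2 = \Aut(Fi_{22})$ and $HN.2 = \Aut(HN)$. Reading the element orders from the \textsc{Atlas}, the missing edges of $\Gamma(Fi_{22})$ are the nine pairs $\{p,q\}$ with $pq$ in the set
$$\{26,\ 33,\ 35,\ 39,\ 55,\ 65,\ 77,\ 91,\ 143\},$$
and those of $\Gamma(HN)$ are the eight pairs with $pq$ in
$$\{33,\ 38,\ 55,\ 57,\ 77,\ 95,\ 133,\ 209\}.$$
Since element orders of $G$ are a subset of those of any overgroup, the missing edges of the two automorphism groups form subsets of these.

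The plan is to dispose of each missing pair $\{p,q\}$ by invoking Theorem~\ref{prop:Brauer_Line} for the corresponding group $G$ with $p$ equal to the larger prime of the pair. A single choice of $p$ eliminates all missing edges incident to $p$ simultaneously. For $Fi_{22}$ and $Fi_{22}.2$ the primes we need are $p \in \{7,11,13\}$: $p=7$ handles $\{5,7\}$; $p=11$ handles $\{3,11\},\{5,11\},\{7,11\}$; and $p=13$ handles the five missing edges containing $13$. For $HN$ and $HN.2$ the primes are $p \in \{11,19\}$, covering respectively the three missing edges containing $11$ and the five containing $19$. In each case these are precisely the primes dividing $|G|$ to the first power.

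What remains is to verify the three hypotheses of Theorem~\ref{prop:Brauer_Line} for each pair $(G,p)$ arising above. Condition (i), $|P|=p$, is immediate from $|G|$. Condition (ii), the uniqueness of the class of nontrivial $p$-elements, is read from the \textsc{Atlas} for the socles; it forces the inertial index $|\N_G(P):\C_G(P)|$ to equal $p-1$, which is the maximum possible value (the order of $\Aut(P) \cong C_{p-1}$), and this maximality is preserved on passage to the index-$2$ extension, so the single class persists there as well. Condition (iii), that the Brauer tree of the principal $p$-block is a line, is the substance of the argument: for the socles $Fi_{22}$ and $HN$ the Brauer trees of all blocks of cyclic defect are explicitly recorded in the literature (for instance in the Hiss--Lux compendium of Brauer trees of sporadic groups) and are indeed lines in each of the relevant cases; for the two extensions one deduces the same shape via the standard Clifford-theoretic description of how the Brauer tree of a principal block transfers to a normal overgroup of index coprime to $p$. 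The main obstacle is this last verification, namely ensuring that in the index-$2$ extensions the outer automorphism neither fuses nor splits nodes in a way that destroys linearity.
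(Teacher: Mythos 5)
Your overall plan---enumerating the missing edges, applying Theorem~\ref{prop:Brauer_Line} with $p$ the larger prime of each missing pair, and covering all missing edges with $p\in\{7,11,13\}$ for $Fi_{22}$-type groups and $p\in\{11,19\}$ for $HN$-type groups---is the right skeleton, and your edge enumeration is correct. But the hypothesis verification contains a genuine error that the paper's proof is structured precisely to avoid.

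You claim that condition (ii), ``exactly one conjugacy class of nontrivial $p$-elements,'' can be ``read from the \textsc{Atlas} for the socles'' and then propagates to the index-$2$ extension because the inertial index is already maximal. The premise is false: $Fi_{22}$ has \emph{two} classes of $13$-elements (13A, 13B, algebraically conjugate, with $\N_{Fi_{22}}(P)\cong 13{:}6$ of inertial index $6<12$), and likewise $HN$ has two classes of $19$-elements. These fuse only in the index-$2$ extensions $Fi_{22}.2$ and $HN.2$. So for $(Fi_{22},p=13)$ and $(HN,p=19)$ the hypothesis of Theorem~\ref{prop:Brauer_Line} fails for the socle, and your argument---which runs in the wrong direction, from socle to extension---has nothing to propagate. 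This is not a cosmetic slip; it leaves all the missing edges incident to $13$ (resp.\ $19$) unproved for $Fi_{22}$ (resp.\ $HN$) themselves.

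The paper's proof sidesteps this by applying Theorem~\ref{prop:Brauer_Line} \emph{only} to the automorphism groups $Fi_{22}.2$ and $HN.2$ (where the one-class hypothesis does hold for all the relevant primes and the Brauer trees are read off directly from Hiss--Lux or the \textsf{GAP} character table library), and then observes that $\Gamma(G)=\Gamma(\Aut(G))$ together with the subring inclusion $\ZZ G\subseteq\ZZ\Aut(G)$ transfers the absence of units of order $pq$ down to the socle. This also removes your acknowledged difficulty about transferring the Brauer tree from the socle up to the extension via Clifford theory: there is no need to transfer anything, since the tree for the extension is taken directly from the literature. You should replace the socle-first verification of conditions (ii) and (iii) with a direct verification for $Fi_{22}.2$ and $HN.2$, and close the argument for $Fi_{22}$ and $HN$ by descent along the subring inclusion rather than by persistence.
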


The proof of Theorem~\ref{prop:Brauer_Line} is based on a method developed by the authors \cite{BaechleMargolisEdinb, 4primaryII} and an application of this method \cite{MargolisConway} in conjunction with modular representation theory, especially the theory of blocks with cyclic defect. Theorem~\ref{prop:Brauer_Line} can be considered as a generalization of \cite[Theorem~D]{4primaryII}. 

Luthar and Passi were the first to consider the unit group of the integral group ring of a simple alternating group, in fact they proved the First Zassenhaus Conjecture for $A_5$ \cite{LutharPassi}. This paper gave rise to a method, extended by Hertweck \cite{HertweckBrauer}, and known nowadays as the HeLP-method. It was used by Salim \cite{Salim2007, Salim2011, Salim2013} to prove the Prime Graph Question for $A_n$ if $n \leq 10$, except for $n=6$. For the exception $A_6$, Hertweck showed the First Zassenhaus Conjecture \cite{HertweckA6} and a special argument from this paper, used to handle units of order $6$, inspired the above mentioned method by the authors. This method was first applied to prove the Prime Graph Question for all almost simple groups having $A_6$ as socle \cite{BaechleMargolisEdinb} (recall that the outer automorphism group of $A_6$ is an elementary-abelian group of order $4$). This completed the proof of the Prime Graph Question for groups with order divisible by exactly three pairwise different primes initiated in \cite{KimmerleKonovalov2015}.

Later Bächle and Caicedo \cite{BC} used the HeLP-method to prove the Prime Graph Question for symmetric and alternating groups of degree at most $17$. Moreover they achieved a generic result on the non-existence of units of certain orders. But attempts to use the HeLP-method to prove the Prime Graph Question for all alternating and symmetric groups have failed.

The Prime Graph Question is also known for almost-simple groups having socle $\operatorname{PSL}(2,p)$ or $\operatorname{PSL}(2,p^2)$  for any prime $p$ \cite{HertweckBrauer,4primaryI}, many almost-simple groups with order divisible by exactly four pairwise different primes \cite{4primaryII} and sixteen sporadic simple groups (thirteen were handled by Bovdi, Konovalov et.al., see for example \cite{BovdiKonovalovM24}, their automorphism groups and three more were treated in \cite{KimmerleKonovalov2015, MargolisConway}).   

The results and methods of this paper parallel some of the developments in the study of the so-called Second Zassenhaus Conjecture which asked if two group bases of an integral group ring are necessarily conjugate in the corresponding rational group algebra. Namely the symmetric groups were the first class of non-solvable groups for which this conjecture was known to have a positive answer \cite{Petersen76}. Moreover principal $p$-blocks with cyclic defect played a very prominent role in the later investigation of the conjecture for simple group, starting with \cite{BleherHissKimmerle}. This also provided the motivation for the epigraph.

\section{Preliminaries}

From here on $G$ will always denote a finite group and for an element $g \in G$ we denote the conjugacy class of $g$ in $G$ as $g^G$. For a subgroup $H$ of $G$, $\C_G(H)$ and $\N_G(H)$ denote its centralizer and normalizer in $G$, respectively. A fundamental notion in the study of torsion units in group rings is the so-called partial augmentation. Namely for an element $a = \sum_{g \in G} \alpha_g g \in \mathbb{Z}G$ we set
\[\varepsilon_g(a) =  \sum_{x \in g^G} \alpha_x. \]
This is called the \textit{partial augmentation of $u$ at $g$}. We collect some fundamental results on the partial augmentations of torsion units.

\begin{proposition}\label{prop:pa}
Let $u \in \V(\mathbb{Z}G)$ be a unit of finite order.
\begin{enumerate}[label=(\alph*)]
\item\label{BermanHigman} If $u \neq 1$ then $\varepsilon_1(u) = 0$ (Berman-Higman Theorem) \cite[Proposition 1.5.1]{JespersDelRioGRG}.
\item If $\varepsilon_g(u) \neq 0$ then the order of $g$ divides the order of $u$ \cite[Theorem 2.3]{HertweckBrauer}.
\end{enumerate}
\end{proposition}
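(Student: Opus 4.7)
Both items are classical results cited by the authors. My plan is to sketch the standard arguments for each, highlighting where the real work lies.

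For (a), the Berman--Higman theorem, I would apply the left regular representation $\rho$ of $G$ on $\mathbb{C}G$. Since $\rho(g)$ is a permutation matrix with no fixed points for $g \neq 1$ and $\rho(1) = I_{|G|}$, the trace $\operatorname{Tr}(\rho(u))$ equals $|G|\alpha_1 = |G|\varepsilon_1(u)$ (the conjugacy class $\{1\}$ being a singleton). Since $u$ has finite order, $\rho(u)$ is diagonalizable with eigenvalues that are roots of unity, so the triangle inequality yields $|G|\,|\varepsilon_1(u)| \le |G|$, whence $\varepsilon_1(u) \in \{-1,0,1\}$. Equality in the triangle inequality for sums of points on the unit circle forces all eigenvalues of $\rho(u)$ to coincide, so $\rho(u)$ is a scalar matrix; since the eigenvalue is a root of unity and $u$ has integer coefficients in the group basis, $u = \pm 1 \in \mathbb{Z}G$. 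The normalization $\sum_g\alpha_g = 1$ rules out $u = -1$, leaving $u = 1$; equivalently, $u \neq 1$ implies $\varepsilon_1(u) = 0$. There is no real obstacle to this argument.

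For (b), Hertweck's theorem, I would argue by contradiction: suppose $\varepsilon_g(u) \neq 0$ for some $g$ whose order does not divide the order of $u$, and pick a prime $p$ with $v_p(|g|) > v_p(|u|)$. The plan uses $p$-modular representation theory and the relation between ordinary and Brauer characters. Starting from the identity
\[\chi(u) = \sum_x \varepsilon_x(u)\chi(x)\]
(sum over conjugacy class representatives $x$) for ordinary characters $\chi$ lifting Brauer characters, and using that Brauer characters separate $p$-regular classes, one compares partial augmentations of $u$ at $p$-singular classes to those of the $p$-regular part $u_{p'}$ at $p$-regular classes. This produces linear relations among the $\varepsilon_h(u)$ that force augmentations at classes with ``too large'' $p$-part to vanish, in particular for the class of $g$. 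The main obstacle is the case where $v_p(|u|) > 0$ and both $u$ and $g$ have nontrivial $p$-parts: here one cannot reduce modulo $p$ directly, and a more delicate induction involving defect groups and decomposition matrices is required. Since the complete proof is carried out in \cite[Theorem 2.3]{HertweckBrauer}, I would ultimately defer to that reference rather than reproduce it.
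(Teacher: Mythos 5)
The paper does not prove this proposition at all: both parts are stated as known results with citations to \cite{JespersDelRioGRG} and \cite{HertweckBrauer}, so there is no in-paper argument to compare against. Your proof of (a) is the standard and complete Berman--Higman argument via the regular representation (the trace bound giving $\varepsilon_1(u)\in\{-1,0,1\}$, the equality case forcing $\rho(u)$ scalar, hence $u=\pm1$, and normalization excluding $-1$), and it is correct. For (b) you give only a heuristic outline and correctly identify where the difficulty lies (the case where $u$ and $g$ both have nontrivial $p$-part, which in Hertweck's paper is handled by passing to a $p$-adic group ring and using Brauer character values of $p$-regular torsion units together with the partial-augmentation congruences); deferring the details to \cite[Theorem 2.3]{HertweckBrauer} is exactly what the authors themselves do, so nothing further is required.
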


Let $p$ be a prime and $a \in \mathbb{Z}G$. A ``folklore'' congruence states that

\[\varepsilon_x(a^p) \equiv \sum_{y^G,\ y^p \in x^G} \varepsilon_y(a) \mod p \]
where the sum runs over all those conjugacy classes of $G$ such that $y^p \in x^G$, cf.\ e.g.\ \cite[Remark 6]{BovdiHertweck}. Together with Proposition~\ref{prop:pa}.\ref{BermanHigman} this yields the following result which will be very useful to us.


\begin{lemma}\label{lemma:wagner}
Let $p$ be a prime, $u \in \V(\mathbb{Z}G)$ a unit of finite order and let $g_1$,...,$g_k$ be representatives of the conjugacy classes of elements of order $p$ in $G$. Then
\[\sum_{i=1}^k \varepsilon_{g_i}(u) \equiv 0 \mod p. \] 
\end{lemma}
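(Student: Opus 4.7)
The plan is to apply the displayed ``folklore'' congruence directly, with $a=u$ and $x=1$. The first step is to determine which conjugacy classes contribute to the right-hand side, i.e., which classes $y^G$ satisfy $y^p\in 1^G=\{1\}$. Since $p$ is prime, the elements $y\in G$ with $y^p=1$ are precisely the identity and the elements of order exactly $p$. Thus the only classes appearing in the sum are the trivial class and the classes $g_1^G,\dots,g_k^G$.

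After this identification, the congruence reads
\[
\varepsilon_1(u^p) \;\equiv\; \varepsilon_1(u) + \sum_{i=1}^k \varepsilon_{g_i}(u) \pmod{p}.
\]
The second step is to eliminate the two ``trivial-class'' terms using the Berman--Higman theorem (Proposition~\ref{prop:pa}\ref{BermanHigman}): provided $u\neq 1$ we have $\varepsilon_1(u)=0$, and provided $u^p\neq 1$ we likewise have $\varepsilon_1(u^p)=0$. The degenerate cases $u=1$ or $u^p=1$ can be checked directly: if $u=1$ then every $\varepsilon_{g_i}(u)=0$, and the order-$p$ case follows from the partial augmentations summing to the augmentation.

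I do not expect any genuine obstacle here; the entire argument is a one-line consequence of the quoted congruence together with the Berman--Higman theorem. The only point that requires a moment of care is verifying that the index set in the ``folklore'' sum collapses exactly to $\{1\}\cup\{g_1^G,\dots,g_k^G\}$, which is immediate because $p$ is prime.
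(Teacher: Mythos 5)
Your main line of argument---applying the displayed ``folklore'' congruence with $x=1$ and then clearing the two trivial-class terms via Berman--Higman---is precisely the proof the paper has in mind, and it is correct whenever $u\neq 1$ and $u^p\neq 1$. The degenerate case $u=1$ is also fine. However, your treatment of the remaining case $u^p=1$, $u\neq 1$ (i.e.\ $u$ of order exactly $p$) does not work; in fact your own observation establishes the \emph{opposite} of the claimed congruence. By Proposition~\ref{prop:pa}, only classes of elements of order $1$ or $p$ can carry a nonzero partial augmentation of $u$, and $\varepsilon_1(u)=0$ by Berman--Higman, so $\sum_{i=1}^k\varepsilon_{g_i}(u)$ is the full augmentation of $u$, namely $1$; and $1\not\equiv 0\pmod p$ for any prime $p$. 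The same failure is visible directly in your displayed congruence: here $\varepsilon_1(u^p)=\varepsilon_1(1)=1$, not $0$, which yields $\sum_i\varepsilon_{g_i}(u)\equiv 1\pmod p$. So the lemma as stated is actually false for units of order exactly $p$; the correct formulation needs the extra hypothesis $u^p\neq 1$ (equivalently, that $u$ does not have order $1$ or $p$). This is a genuine, if minor, slip in the paper as well, but it is harmless for its applications: Lemma~\ref{lemma:wagner} is only invoked there for units of order $pq$ with $q\neq p$, in which case $u^p\neq 1$ holds automatically and your main argument goes through verbatim.
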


If $u$ is a unit of finite order $n$ in $\V(\mathbb{Z}G)$ and $D$ an ordinary representation of $G$, then $D$ extends linearly to $\mathbb{Z}G$ and $D(u)$ is an invertible matrix of order dividing $n$. Hence $D(u)$ is a diagonalizable matrix whose eigenvalues are $n$-th complex roots of unity. For such an $n$-th root of unity $\xi$ we will denote the multiplicity of $\xi$ as an eigenvalue of $D(u)$ as $\mu(\xi, u, \chi)$, where $\chi$ is the character of $D$.

At the heart of the proof of our Theorem~\ref{prop:Brauer_Line} lies the following reformulation of \cite[Theorem~1.2]{MargolisConway}. Its proof uses the method introduced by the authors in \cite{BaechleMargolisEdinb,4primaryII}. For an ordinary character $\chi$ denote by $\mathbb{Q}(\chi)$ the smallest field containing all the character values of $\chi$.

\begin{theorem}\label{prop:ineqmultiplicities} Let $G$ be a finite group and $p$ be an odd prime. Assume that some $p$-block of $G$ is a Brauer Tree Algebra whose Brauer tree is of the form
\[
\begin{tikzpicture}
\node[label=north:{$\chi_1$}] at (0,1.5) (1){};
\node[label=north:{$\chi_{2}$}] at (1.5,1.5) (2){};
\node[label=north:{$\chi_{3}$}] at (3,1.5) (3){};
\node[label=north:{$\chi_{p-2}$}] at (5.5,1.5) (4){};
\node[label=north:{$\chi_{p-1}$}] at (7,1.5) (5){};
\node[label=north:{$\chi_{p}$}] at (8.5,1.5) (6){};
\foreach \p in {1,2,3,4, 5, 6}{
\draw[fill=white] (\p) circle (.075cm);
}
\draw (.075,1.5)--(1.425,1.5);
\draw (1.575,1.5)--(2.925,1.5);
\draw[dashed] (3.075,1.5)--(5.425,1.5);
\draw (5.575,1.5)--(6.925,1.5);
\draw (7.075,1.5)--(8.425,1.5);
\end{tikzpicture}
\] 
for ordinary characters $\chi_1$,...,$\chi_p$. 
Assume that in the rings of integers of $\mathbb{Q}(\chi_1)$,...,$\mathbb{Q}(\chi_p)$ the prime $p$ is unramified.

Let $u \in \V(\ZZ G)$ be a unit of order $pm$, for a positive integer $m$ not divisible by $p$. Let $\xi$ be some $m$-th root of unity and let $\zeta_p$ be a primitive complex  $p$-th root of unity.
Then 
\[\sum_{i=1}^{p} (-1)^i \mu(\xi \cdot \zeta_p, u, \chi_i)\ \leq\ \mu(\xi, u, \chi_1). \] 
\end{theorem}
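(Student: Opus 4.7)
The statement is a reformulation of \cite[Theorem~1.2]{MargolisConway}, so the plan is to adapt that proof, which rests on the method of the authors developed in \cite{BaechleMargolisEdinb,4primaryII}, to the present setting. First I would decompose $u = u_p u_{p'}$ into commuting $p$- and $p'$-parts of orders $p$ and $m$ respectively; any ordinary representation affording $\chi_i$ then becomes diagonalisable on $u$ with eigenvalues of the form $\eta\,\zeta_p^k$ for $\eta$ an $m$-th root of unity and $k \in \{0, 1, \ldots, p-1\}$. Writing $M_{i,k} := \mu(\xi \zeta_p^k, u, \chi_i) \geq 0$, the goal is to bound $\sum_{i=1}^{p} (-1)^i M_{i,1}$ from above by $M_{1,0}$.

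Next, the shape of the Brauer tree gives the decomposition matrix. A line with $p$ vertices and $p-1$ edges forces the block to have cyclic defect group of order $p$, inertial index $e = p-1$, and no exceptional character. Labelling the Brauer characters of the simple modules in the block as $\varphi_1, \ldots, \varphi_{p-1}$, with $\varphi_j$ corresponding to the edge joining $\chi_j$ and $\chi_{j+1}$, the decomposition reads $\chi_i|_{p\text{-reg}} = \varphi_{i-1} + \varphi_i$ (with $\varphi_0 := \varphi_p := 0$), and thus
\[
\sum_{k=0}^{p-1} M_{i,k} \;=\; N_{i-1} + N_i,
\]
where $N_j := \mu_{\mathrm{Br}}(\xi, u_{p'}, \varphi_j) \geq 0$ are the non-negative Brauer-character eigenvalue multiplicities of the $p$-regular part $u_{p'}$.

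The hypothesis that $p$ is unramified in $\mathbb{Q}(\chi_i)$ forces $\zeta_p \notin \mathbb{Q}(\chi_i)$, since $p$ is totally ramified in $\mathbb{Q}(\zeta_p)$. Together with $\gcd(p,m) = 1$ (which ensures $\zeta_p \notin \mathbb{Q}(\chi_i, \xi)$, as $p$ is also unramified in $\mathbb{Q}(\xi) \subseteq \mathbb{Q}(\zeta_m)$ and hence in the compositum $\mathbb{Q}(\chi_i, \xi)$), this makes $\mathrm{Gal}(\mathbb{Q}(\chi_i, \xi, \zeta_p) / \mathbb{Q}(\chi_i, \xi))$ act transitively on the primitive $p$-th roots of unity. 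Since $M_{i,k}$ is constant along Galois orbits of the eigenvalue $\xi \zeta_p^k$ over the field of character values, this yields $M_{i,1} = M_{i,2} = \cdots = M_{i,p-1}$, a common value $M_i$ say, so that the summation relation simplifies to $M_{i,0} + (p-1) M_i = N_{i-1} + N_i$.

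These linear relations are insufficient on their own to deduce the inequality in the statement; the remaining ingredient is the method of \cite{BaechleMargolisEdinb,4primaryII} applied to the cyclic-defect situation. In a block with cyclic defect, the projective indecomposable modules have ordinary characters obtained by ``walking'' along the Brauer tree; lifting these projective lattices to characteristic zero and evaluating on $u$ supplies non-negativity constraints on the $M_{i,k}$ that go beyond the sum identity above. Combining these along the line tree with the alternating signs dictated by the bipartite walking order on a path produces, after telescoping, the desired inequality, with the term $\mu(\xi, u, \chi_1)$ emerging as the residual boundary contribution from the endpoint vertex $\chi_1$. The main obstacle I anticipate is the careful bookkeeping of this lifting argument and the precise matching of signs in the telescoping; the unramifiedness hypothesis is essential precisely to ensure that the primitive $p$-th root eigenvalue multiplicities in question are unambiguously defined integers rather than Galois-orbit averages.
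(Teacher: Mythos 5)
The paper does not prove this theorem: it is presented as a reformulation of \cite[Theorem~1.2]{MargolisConway}, and the text merely points to the method of \cite{BaechleMargolisEdinb,4primaryII} without giving an argument. So there is no in-paper proof to compare against; I can only assess your sketch on its own terms.

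Your setup is correct and clean: the decomposition $u=u_pu_{p'}$, the decomposition-matrix description $\widehat{\chi}_i=\varphi_{i-1}+\varphi_i$ coming from the line-shaped tree, the identity $M_{i,0}+(p-1)M_i=N_{i-1}+N_i$, and the Galois argument showing $M_{i,1}=\cdots=M_{i,p-1}$ (your verification that $\QQ(\chi_i,\xi)\cap\QQ(\zeta_p)=\QQ$ via unramifiedness is exactly right). You are also right that these relations together with $M_{i,0},M_i,N_j\ge 0$ do \emph{not} imply the claimed inequality. For $p=3$, take $M_{1,0}=M_{2,0}=0$, $M_{3,0}=10$, $M_1=M_3=0$, $M_2=5$, $N_1=0$, $N_2=10$: all linear constraints and non-negativity hold, yet $\sum(-1)^iM_i=5>0=M_{1,0}$. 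So some genuinely new input is indispensable, as you say.

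The gap is that your final paragraph does not supply that input. ``Lifting projective lattices to characteristic zero and evaluating on $u$'' is not a usable mechanism here: a projective $\ZZ_pG$-lattice restricted along $\ZZ_p\langle u\rangle\hookrightarrow\ZZ_pG$ for a \emph{torsion unit} $u\notin G$ is in general not projective, so one cannot conclude that $u_p$ acts freely on it; the standard ``projective restriction'' argument only works for group elements. What is actually needed is a finer, lattice-theoretic constraint on the $\ZZ_p[\zeta_m]\langle u_p\rangle$-summand types (trivial, rank $p-1$, free) occurring inside the $\xi$-eigenspaces of the ordinary lattices, linked to the uniserial composition structure of their reductions across the cyclic defect block --- this is the content of \cite[Theorem~1.2]{MargolisConway} and is not recovered by invoking ``walking'' and ``telescoping'' alone. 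As written, the decisive step is asserted rather than proved, so the proposal does not establish the inequality.
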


In fact the multiplicities of eigenvalues of a torsion unit $u$ under a representation can be computed from the values of the corresponding character and the partial augmentations of $u$ via the orthogonality relations, cf.\ e.g.\ \cite{LutharPassi}. We record this for later use. Here, $\operatorname{Tr}_{K/\mathbb{Q}} \colon K \to \mathbb{Q}$ denotes the $\mathbb{Q}$-linear number theoretic trace of the Galois extension $K/\mathbb{Q}$, i.e., the sum of all the Galois conjugates of the element.

\begin{lemma}\label{prop:HeLP}
Let $u \in \V(\mathbb{Z}G)$ be a unit of order $n$ and let $D$ be an ordinary representation of $G$ with character $\chi$. For a positive integer $k$ denote by $\zeta_k$ a primitive complex $k$-th root of unity. Then the multiplicity of an $n$-th root of unity $\xi$ as an eigenvalue of $D(u)$ is given by
\begin{align}\label{HeLP-restrictions}
\mu(\xi, u, \chi) = \frac{1}{n} \sum_{\substack{d \mid n}} \operatorname{Tr}_{\mathbb{Q}(\zeta_{n/d})/\mathbb{Q}}(\chi(u^d)\xi^{-d}).
\end{align}
\end{lemma}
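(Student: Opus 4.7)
The plan is to derive the formula by a two-step procedure: first apply the orthogonality of characters of a cyclic group to get a preliminary formula summed over all residues modulo $n$, and then repackage this sum as a sum over divisors of $n$ by exploiting the Galois action on cyclotomic fields.

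First I would observe that since $u$ has order dividing $n$, the matrix $D(u)$ satisfies $D(u)^n = I$ and is therefore diagonalizable over $\mathbb{C}$ with eigenvalues among the $n$-th roots of unity. Writing $\mathcal{U}_n$ for the set of $n$-th roots of unity, one obtains
\[
\chi(u^d) = \operatorname{Tr}(D(u)^d) = \sum_{\eta \in \mathcal{U}_n} \mu(\eta,u,\chi)\,\eta^d
\]
for every integer $d \geq 0$. Combining this with the standard orthogonality
\[
\frac{1}{n}\sum_{d=0}^{n-1} \eta^d \xi^{-d} = \delta_{\eta,\xi} \qquad (\eta,\xi \in \mathcal{U}_n)
\]
and interchanging the order of summation yields the preliminary identity
\[
\mu(\xi,u,\chi) = \frac{1}{n}\sum_{d=0}^{n-1} \chi(u^d)\,\xi^{-d}.
\]

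Second I would reorganize this sum by partitioning $\{0,1,\dots,n-1\}$ according to the value $e := \gcd(d,n)$. For each divisor $e$ of $n$, the indices $d$ with $\gcd(d,n)=e$ are exactly the numbers $d = ej$ with $j$ running through a set of representatives of $(\mathbb{Z}/(n/e))^{\times}$. Since $\chi(u^e)$ and $\xi^{-e}$ both lie in $\mathbb{Q}(\zeta_{n/e})$, and since the Galois automorphism $\sigma_j \in \operatorname{Gal}(\mathbb{Q}(\zeta_{n/e})/\mathbb{Q})$ with $\sigma_j(\zeta_{n/e}) = \zeta_{n/e}^{j}$ sends every $(n/e)$-th root of unity $\omega$ to $\omega^j$, one checks using the eigenvalue expansion above that
\[
\chi((u^e)^j) = \sigma_j\bigl(\chi(u^e)\bigr) \qquad\text{and}\qquad \xi^{-ej} = \sigma_j(\xi^{-e}).
\]
Hence the partial sum indexed by a fixed $e$ equals
\[
\sum_{\substack{j \in (\mathbb{Z}/(n/e))^{\times}}} \sigma_j\!\bigl(\chi(u^e)\,\xi^{-e}\bigr) \; = \; \operatorname{Tr}_{\mathbb{Q}(\zeta_{n/e})/\mathbb{Q}}\!\bigl(\chi(u^e)\,\xi^{-e}\bigr),
\]
and summing over $e \mid n$ gives exactly the claimed formula (after renaming $e$ to $d$).

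The only real point to get right is the identification of the partial sums with Galois traces, which is the content of the second step; everything else is essentially the diagonalization of $D(u)$ together with orthogonality of roots of unity. Since the lemma is a standard recorded fact from the HeLP method (cf.\ \cite{LutharPassi}) and no deep input beyond elementary character theory and Galois theory of cyclotomic fields is needed, no part of the argument should present a genuine obstacle.
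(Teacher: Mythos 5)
Your proof is correct. The paper records this lemma as a known fact from the HeLP method, citing \cite{LutharPassi}, and gives no proof of its own; your two-step argument (orthogonality of $n$-th roots of unity to get $\mu(\xi,u,\chi)=\tfrac1n\sum_{d=0}^{n-1}\chi(u^d)\xi^{-d}$, then partitioning the residues by $\gcd(d,n)$ and recognizing each partial sum as a Galois trace) is the standard derivation.
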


Finally we will also need a theorem of Brauer \cite[Corollary~5]{BrauerAppl} which was used by Glauberman in the proof of his famous $Z^*$-theorem. We give it in a slightly more general form which can be found in \cite[(7.7) Theorem]{NavarroBook}. For a prime $p$ we will use the standard notation $\O_{p'}(G)$ to denote the largest normal subgroup of $G$ whose order is not divisible by $p$.

\begin{theorem}\label{th:Glauberman}
Let $p$ be a prime, $g$ a $p$-element in $G$ and $h$ an element in $\O_{p'}(\C_G(g))$. Let $\chi$ be a character in the principal $p$-block of $G$. Then $\chi(g) = \chi(gh)$.
\end{theorem}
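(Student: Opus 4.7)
I plan to reduce the identity $\chi(gh)=\chi(g)$ to a statement about irreducible Brauer characters of $H:=\C_G(g)$, by treating $g$ as the $p$-part and $h$ as the $p'$-part of the commuting product $gh$. By Brauer's Second Main Theorem applied to the $p$-section of $g$, for every $\chi \in \Irr(G)$ one has
\[
\chi(gh) \;=\; \sum_{\phi \in \operatorname{IBr}(H)} d^{g}_{\chi\phi}\, \phi(h),
\]
where the $d^{g}_{\chi\phi}$ are the generalized decomposition numbers attached to $g$. Specializing $h=1$ gives $\chi(g)=\sum_\phi d^{g}_{\chi\phi}\,\phi(1)$, so it is enough to show that $\phi(h)=\phi(1)$ for every Brauer character $\phi$ that contributes non-trivially to the sum.

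\textbf{Cutting down to the principal block of $H$.} To identify the contributing $\phi$'s I would invoke Brauer's Third Main Theorem. Since $\chi$ lies in the principal block $B_0(G)$, the coefficient $d^{g}_{\chi\phi}$ vanishes unless $\phi$ belongs to a block $b$ of $H$ whose Brauer correspondent satisfies $b^G = B_0(G)$; the Third Main Theorem then forces $b = B_0(H)$. Consequently
\[
\chi(gh) \;=\; \sum_{\phi \in \operatorname{IBr}(B_0(H))} d^{g}_{\chi\phi}\, \phi(h).
\]

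\textbf{The $\O_{p'}$-kernel step, and the main difficulty.} It remains to show that every $\phi \in \operatorname{IBr}(B_0(H))$ has $\O_{p'}(H)$ in its kernel, whence $\phi(h)=\phi(1)$. Equivalently, the principal $p$-block of $H$ is inflated from the principal block of $H/\O_{p'}(H)$. The cleanest argument sets $N:=\O_{p'}(H)$ and works with the idempotent $e := |N|^{-1}\sum_{n\in N} n$ in $FH$, for $F$ a sufficiently large field of characteristic $p$: since $N$ is a normal $p'$-subgroup, $e$ is a well-defined central idempotent, and $FH \cdot e \cong F[H/N]$ canonically. The principal block idempotent $e_0$ of $FH$ then satisfies $e_0 e = e_0$, because the trivial $FH$-module lies in $B_0(H)$ and is fixed by $e$; consequently $e_0 \in FH\cdot e$, so every simple module in $B_0(H)$ factors through $H/N$. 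Putting this together,
\[
\chi(gh) \;=\; \sum_{\phi \in \operatorname{IBr}(B_0(H))} d^{g}_{\chi\phi}\, \phi(1) \;=\; \chi(g).
\]
The hard part is isolating and proving the $\O_{p'}$-kernel lemma cleanly; once that is in place, the two Main Theorems act as bookkeeping and the rest is a one-line computation.
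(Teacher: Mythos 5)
Your argument is correct and is exactly the standard proof of this fact: the paper does not prove the statement itself but cites it (Brauer's original article and Navarro's book, Theorem (7.7)), and the cited proof proceeds just as yours does --- Brauer's Second Main Theorem on the $p$-section of $g$, the Third Main Theorem to cut the sum down to $\operatorname{IBr}(B_0(\C_G(g)))$, and the fact that $\O_{p'}(\C_G(g))$ lies in the kernel of every module in the principal block of $\C_G(g)$. All three ingredients are applied correctly (in particular $h$ is $p$-regular in $\C_G(g)$, so the Second Main Theorem applies, and $b^G$ is defined for every block $b$ of $\C_G(g)$, so the Third Main Theorem applies), so there is nothing to fix.
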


\section{Principal Blocks with Defect $1$}

We will now embark on the proof of Theorem~\ref{prop:Brauer_Line}.

\begin{proof}[Proof of Theorem~\ref{prop:Brauer_Line}] Firstly, if $(p, q) = (3, 2)$, then there is an element of order $6$ in $\V(\ZZ G)$ if and only if there is an element of order $6$ in $G$, by \cite[Theorem~D]{4primaryII}. So we may always assume that $(p, q) \not= (3, 2)$.

Assume there is no element of order $pq$ in $G$ but there is a unit of order $pq$ in $\V(\ZZ G)$. Let $y$ be an element of order $p$ in $G$. As there is exactly one conjugacy class of elements of order $p$ in $G$ it follows that $[\N_G(\langle y\rangle ):\C_G(y)] = p-1$ and that there are exactly $p$ irreducible complex characters in the principal $p$-block $B_0$ \cite[(11.1) Theorem]{NavarroBook}, say $\chi_1,...,\chi_p$, where $\chi_1 = \textbf{1}$ is the principal character. We will use the theory of blocks of cyclic defect and Brauer trees as described e.g.\ in \cite{FeitRep}. W.l.o.g. we can assume that the Brauer tree of $B_0$ is of the form
\begin{equation}\label{brauer_tree_principal_block}
\begin{tikzpicture}
\node[label=north:{$\chi_1 = \textbf{1}$}] at (0,1.5) (1){};
\node[label=north:{$\chi_{2}$}] at (1.5,1.5) (2){};
\node[label=north:{$\chi_{3}$}] at (3,1.5) (3){};
\node[label=north:{$\chi_{p-2}$}] at (5.5,1.5) (4){};
\node[label=north:{$\chi_{p-1}$}] at (7,1.5) (5){};
\node[label=north:{$\chi_{p}$}] at (8.5,1.5) (6){};
\foreach \p in {1,2,3,4, 5, 6}{
\draw[fill=white] (\p) circle (.075cm);
}
\draw (.075,1.5)--(1.425,1.5);
\draw (1.575,1.5)--(2.925,1.5);
\draw[dashed] (3.075,1.5)--(5.425,1.5);
\draw (5.575,1.5)--(6.925,1.5);
\draw (7.075,1.5)--(8.425,1.5);
\end{tikzpicture}.
\end{equation}

Note that by our assumptions the character values of all irreducible complex characters of $G$ are $p$-rational, i.e. $p$ is unramified in every ring of character values of irreducible ordinary characters in the principal block of $G$.

Define $\nu$ to be the alternating sum of the ordinary irreducible characters in $B_0$, \[\nu(a) = \sum_{i = 1}^p (-1)^{i} \chi_i(a), \qquad a \in \ZZ G. \]
 
We will show that in our situation $\nu(g) = 0$ for all $ g \in G$ and obtain a contradiction with the linear independence of the irreducible ordinary characters of $G$. Recall that a torsion unit in $\V(\ZZ G)$ is called $p$-regular if it has order not divisible by $p$ and $p$-singular otherwise. In particular, an element $g \in G$ is $p$-regular precisely if it is a $p'$-element. We now proceed to show that $\nu$ evaluates to zero in several cases.

 \begin{enumerate}[label=(\alph*)]
 \item \label{p-regular} \emph{For $p$-regular units of $\V(\mathbb{Z}G)$.}
 Let $\psi_1, ..., \psi_{p-1}$ be the irreducible Brauer characters in $B_0$ corresponding to the edges of the Brauer tree in \eqref{brauer_tree_principal_block} labeling the edges from left to right. Let $x$ be a $p'$-element of $G$. Then $\chi_1(x) = \psi_1(x)$,\ $\chi_p(x) = \psi_{p-1}(x)$,\ and $\chi_i(x) = \psi_{i-1}(x) + \psi_i(x)$ for every $i \in \{2, ..., p-1\}$. So we get 
 \[ \nu(x)\ =\ \sum_{i = 1}^p (-1)^{i} \chi_i(x)\ =\ -\psi_1(x) + \sum_{i = 2}^{p-1} \left( (-1)^{i} \psi_{i-1}(x) + \psi_i(x) \right) -  \psi_{p-1}(x)\ =\ 0, \]
 cf.\ \cite[Chapter VII, Theorem 2.15 (iii)]{FeitRep}.

Assume now that $v \in \V(\ZZ G)$ is $p$-regular. Then $\varepsilon_h(v) = 0$ for all $p$-singular elements $h \in G$ by Proposition~\ref{prop:pa} and 
\[\chi(v) = \sum_{x^G,\ p \nmid o(x)} \varepsilon_x(v) \chi(x)\]
for any ordinary character $\chi$, where the sum runs over all $p$-regular conjugacy classes of $G$. Thus 
\[ \nu(v) \ = \ \sum_{i = 1}^p (-1)^{i} \chi_i(v)\ =\ \sum_{i = 1}^p (-1)^{i} \sum_{x^G,\ p \nmid o(x)} \varepsilon_x(v) \chi_i(x)\ =\ \sum_{x^G,\ p \nmid o(x)} \varepsilon_x(v) \sum_{i = 1}^p (-1)^{i} \chi_i(x)\ =\ 0. \]

 \item \emph{For $p$-elements of $G$.}\label{p-elements} Assume that $u \in \V(\ZZ G)$ is an element of order $pq$. For a positive integer $k$ denote by $\zeta_k$ a primitive complex $k$-th root of unity. Note that since $\chi_i$ is $p$-rational for every $i \in \{1,...,p\}$ we have $\Tr_{\QQ(\zeta_p)/\QQ}(\chi_i(u^q) \zeta_p) = -\chi_i(u^q)$.	
 Thus by Theorem~\ref{prop:ineqmultiplicities} with $\xi = 1$, Lemma~\ref{prop:HeLP} using \ref{p-regular} we have
 \begin{align*} 1 &= \mu(1, u, \chi_1) \\
  &\geq\ \sum_{i=1}^p (-1)^i \mu(\zeta_p, u, \chi_i) \\
 &= \frac{1}{pq} \sum_{i = 1}^p (-1)^i \Big( \chi_i(u^{pq}) + \Tr_{\QQ(\zeta_q) / \QQ}(\chi_i(u^p)) + \Tr_{\QQ(\zeta_p) / \QQ}(\chi_i(u^q) \zeta_p^{-q}) \\
 & \phantom{= \frac{1}{pq} \sum_{i = 1}^p (-1)^i \Big(}  +  \Tr_{\QQ(\zeta_{pq}) / \QQ}(\chi_i(u) \zeta_p^{-1})\Big) \\
\end{align*}
\begin{align} \begin{split}
  & \ =\ \frac{1}{pq}\Bigg( \sum_{i=1}^{p} (-1)^i \chi_i(1) + \Tr_{\QQ(\zeta_q)/\QQ}\left( \sum_{i=1}^{p} (-1)^i \chi_i(u^p)\right) - \sum_{i=1}^{p} (-1)^i \chi_i(u^q) 
 \\ & \phantom{\ =\ \frac{1}{pq}\Bigg(}  + \Tr_{\QQ(\zeta_{pq})/\QQ}\left(\sum_{i=1}^{p} (-1)^i  \chi_i(u)\zeta_{p}^{-1}\right)  \Bigg) 
 \\ & \ =\ \frac{1}{pq} \left( \nu(1) + \Tr_{\QQ(\zeta_q)/\QQ}\left( \nu(u^p) \right) - \nu(u^q) + \Tr_{\QQ(\zeta_{pq})/\QQ}\left(\sum_{i=1}^{p} (-1)^i  \chi_i(u)\zeta_{p}^{-1}\right) \right)
 \\ & \ =\ \frac{1}{pq} \left( - \nu(u^q)  + \Tr_{\QQ(\zeta_{pq})/\QQ}\left(\sum_{i=1}^{p} (-1)^i  \chi_i(u)\zeta_{p}^{-1}\right)  \right). \end{split} \label{eq:ineq_xi1_1} 
\end{align}
 
 For $i \in \{1, ..., p\}$ write \[\chi_i(u) = \varepsilon_y(u)\chi_i(y) + \sum_{j=1}^s \varepsilon_{x_j}(u) \chi_i(x_j),\] where $y \in G$, as above, is an element of order $p$ and $x_1, ..., x_s$ is a set of representatives of the conjugacy classes of $G$ consisting of elements of order $q$. This is possible by Proposition~\ref{prop:pa} and the assumption that $G$ does not contain elements of order $pq$. Since $G$ only has one conjugacy class of elements of order $p$, $\chi_i(u^q) = \chi_i(y)$ and this character value is an integer for all $i \in \{1, ..., p\}$. Hence from \eqref{eq:ineq_xi1_1},
 \begin{align*} 
 pq \ & \geq\ - \nu(y)  + \sum_{i=1}^{p} (-1)^i \Tr_{\QQ(\zeta_{pq})/\QQ}\left(\varepsilon_y(u)\chi_i(y) \zeta_{p}^{-1}\right) \\
  & \phantom{\geq\ -} + \Tr_{\QQ(\zeta_{pq})/\QQ}\left(\sum_{j=1}^s \varepsilon_{x_j}(u)\zeta_p^{-1} \sum_{i=1}^{p} (-1)^i \chi_i(x_j) \right) \\
 & =\ -\nu(y) + \nu(y) \varepsilon_y(u) \Tr_{\QQ(\zeta_{pq})/\QQ}\left(\zeta_{p}^{-1}\right) + \Tr_{\QQ(\zeta_{pq})/\QQ}\left(\sum_{j=1}^s \varepsilon_{x_j}(u)\zeta_p^{-1}\nu(x_j) \right).
 \end{align*} 
 As $x_1, ..., x_s$ are $p$-regular the last summand evaluates to zero by \ref{p-regular}. Since $\Tr_{\QQ(\zeta_{pq})/\QQ}\left(\zeta_{p}^{-1}\right) = -(q-1)$ we get
 \[ pq\ \geq\ \Big(-1 - (q-1)\varepsilon_y(u) \Big) \nu(y). \]
 
 On the other hand, assume w.l.o.g. $\zeta_p\zeta_q = \zeta_{pq}$. By Theorem~\ref{prop:ineqmultiplicities} with $\xi = \zeta_q$, Lemma~\ref{prop:HeLP} and since $1$ and $u^p$ are $p$-regular we get similar as in \eqref{eq:ineq_xi1_1}: 
\begin{align*} 
  0\ =\ & \mu(\zeta_q, u, \chi_1) \\
   \geq\ & \sum_{i=1}^p (-1)^i \mu(\zeta_q\zeta_p,u,\chi_i) \\
   =\ & \frac{1}{pq} \sum_{i=1}^{p} (-1)^i \Big(\chi_i(1)  + \Tr_{\QQ(\zeta_q)/\QQ}\left(\chi_i(u^p)\zeta_{q}^{-p}\right) + \Tr_{\QQ(\zeta_p)/\QQ}(\chi_i(u^q)\zeta_p^{-q}) \\
  & \phantom{\frac{1}{pq} \sum_{i=1}^{p} (-1)^i \Big(} + \Tr_{\QQ(\zeta_{pq})/\QQ}\left(\chi_i(u)\zeta_{pq}^{-1}\right) \Big) \\ 
 =\ & \frac{1}{pq}\left(\nu(1) + \Tr_{\QQ(\zeta_q)/\QQ}(\nu(u^p)\zeta_q^{-p}) - \nu(u^q) + \Tr_{\QQ(\zeta_{pq})/\QQ}\left(\sum_{i=1}^p (-1)^i\chi_i(u)\zeta_{pq}^{-1}\right)  \right)  \\
 =\ & \ \frac{1}{pq} \left( - \nu(u^q)  + \Tr_{\QQ(\zeta_{pq})/\QQ}\left(\sum_{i=1}^{p} (-1)^i \chi_i(u)\zeta_{pq}^{-1}\right) \right). 
 \end{align*}
 Again expressing $\chi_i(u) = \varepsilon_y(u)\chi_i(y) + \sum_{j=1}^s \varepsilon_{x_j}(u) \chi_i(x_j)$ with $y$ of order $p$ and $x_1, ..., x_s$ all $p$-regular we obtain
\begin{align*}
  0 \ & \geq\  \Big( - \nu(y)  + \nu(y)\Tr_{\QQ(\zeta_{pq})/\QQ}\left(\varepsilon_y(u) \zeta_{pq}^{-1}\right) \Big) + \Tr_{\QQ(\zeta_{pq})/\QQ}\left(\sum_{j=1}^s \varepsilon_{x_j}(u)\zeta_{pq}^{-1} \nu(x_j) \right) \\
  & =\ (\varepsilon_y(u) - 1) \nu(y). 
   \end{align*}
 Summarizing we have \begin{align} \Big(\varepsilon_y(u) - 1\Big) \nu(y) \ \leq\ 0 \qquad \text{and} \qquad  \Big(1 + (q-1)\varepsilon_y(u) \Big) \nu(y) \ \geq \ -pq. \label{eq:inequalities} \end{align}
  
 By Lemma~\ref{lemma:wagner}, $\varepsilon_y(u) \equiv 0 \bmod p$ and $\varepsilon_y(u) = 1 - \sum_{x^G,\ o(x) = q} \varepsilon_x(u) \equiv 1 \bmod q$. Hence either \[ \varepsilon_y(u) \geq p  \qquad \text{or} \qquad \varepsilon_y(u) \leq -p. \]
  
 Assume first $\varepsilon_y(u) \geq p$. Then by the first inequality in \eqref{eq:inequalities} we get $\nu(y) \leq 0$. The second inequality in \eqref{eq:inequalities} yields 
 \[ \nu(y)\ \geq\ - \frac{pq}{1 + (q-1)p}\ =\ -1 - \frac{p-1}{1 + (q-1)p}\ >\ -2  \]

 Assume now $\varepsilon_y(u) \leq -p$. Then  by the first inequality in \eqref{eq:inequalities} we get $\nu(y) \geq 0$ and the second inequality in \eqref{eq:inequalities} gives 
 \[ \nu(y)\ \leq\ \frac{pq}{-1 + (q-1)p} < p, \] since $(p, q) \not= (3,2)$. 
 
 Taken together we obtain
 \begin{align}\label{eq:CompleteIneq}
 -p \ < \ \nu(y)\ <\ p.
 \end{align}
 
Let $D$ be an ordinary representation of $G$ with character $\chi$. As $y$ is rational in $G$ and of order $p$ each primitive $p$-th root of unity appears with the same multiplicity in the eigenvalues of $D(y)$. If in the eigenvalues of $D(y)$ each primitive $p$-th root of unity is replaced by $1$ then $\chi(y)$ increases by $p \cdot \mu(\zeta_p,y,\chi)$.
Hence $\chi(y) \equiv \chi(1) \bmod p$ for any ordinary character $\chi$. So $0 = \nu(1) \equiv \nu(y) \bmod p$ and in any case by \eqref{eq:CompleteIneq} we have $\nu(y)=  0$, as desired. 

 \item \emph{For arbitrary $p$-singular elements of $G$.} Let $h \in G$ be a $p$-singular element in $G$. Then $h$ has order $pm$ for some positive integer $m$ not divisible by $p$. Write $h = h_ph_{p'}$, where $h_p$ and $h_{p'}$ denote the $p$-part and the $p'$-part of $h$, respectively. Then $h_p$ is conjugate in $G$ to $y$ and $\C_G(h_p) = \langle h_p \rangle \times Q$, for a $p'$-subgroup $Q$ of $G$, by Burnside's $p$-complement theorem. Hence the $p'$-part $h_{p'}$ of $h$ is contained in $Q = \O_{p'}(\C_G(h_p))$. By Theorem~\ref{th:Glauberman} and part \ref{p-elements} we obtain
 \[ \nu(h)\ =\ \sum_{i = 1}^p (-1)^{i} \chi_i(h)\ =\ \sum_{i = 1}^p (-1)^{i} \chi_i(h_ph_{p'})\ =\ \sum_{i = 1}^p (-1)^{i} \chi_i(y)\ =\ \nu(y)\ =\ 0. \]
 
\end{enumerate}
We conclude $\nu(g) = 0$ for all $g \in G$, the promised contradiction.
\end{proof}

\section{Applications}

We can now apply Theorem~\ref{prop:Brauer_Line} to groups where the principal $p$-block for certain primes is well behaved. This is in particular the case for alternating and symmetric groups of degree $n$ and primes $p > \frac{n}{2}$ and suffices to obtain a positive answer for the Prime Graph Question in these cases.

\begin{proof}[Proof of Theorem~\ref{th:AltSym}] The outer automorphism group of a simple alternating group of degree $n$ is isomorphic to a cyclic group of order $2$, except if $n=6$. As the Prime Graph Question has a positive answer in case $n \leq 17$ by \cite[Theorem~5.1]{BC}, we hence may assume that $n \neq 6$ and $\Out(A_n) \simeq C_2$. Let $q < p \leq n$ be primes.

First consider symmetric groups and assume that $S_n$ does not contain an element of order $pq$. Then $p > \frac{n}{2}$ and a Sylow $p$-subgroup of $S_n$ is of order $p$. Since symmetric groups are rational, all ordinary and Brauer characters are rational-valued. Thus by \cite[VII, Theorem~9.2]{FeitRep} the Brauer tree of the principal $p$-block of $S_n$ is a line and hence $\V(\ZZ S_n)$ also does not have an element of order $pq$ by Theorem~\ref{prop:Brauer_Line}. 

Now consider the alternating group $A_n$. Since $\ZZ A_n \subseteq \ZZ S_n$ is a subring, we only need to deal with those cases where $S_n$ possesses elements of order $pq$, but $A_n$ does not, i.e. $q = 2$ and $n = p + 2$ or $n = p + 3$. So in particular, $A_n$ has only one conjugacy class of elements of order $p$. The outer automorphism $\sigma$ of $A_n$ defines an admissible automorphism of the Brauer Tree of the principal $p$-block of $A_n$ by \cite[Lemma~3.1]{FeitBrauerTrees}. As $\sigma$ fixes the trivial character, it follows that it fixes every character in the principal block, cf. the definition of admissible automorphism in \cite[p.45]{FeitBrauerTrees}. A character $\chi$ is fixed by $\sigma$ if and only if it is rational. So the principal block contains only rational characters and its Brauer Tree is a line \cite[VII, Theorem~9.2]{FeitRep}.
Again Theorem~\ref{prop:Brauer_Line} can be used to see that there are no units of order $2p$ in $\V(\ZZ A_n)$. 
\end{proof}

Theorem~\ref{prop:Brauer_Line} can not only be used for symmetric and alternating groups, but is also strong enough to provide an affirmative answer to the Prime Graph Question for other almost simple groups; in this case for almost simple groups having sporadic simple group $Fi_{22}$ or $HN$ as socle.

\begin{figure}[ht!]
\caption{Prime graphs of the sporadic simple Harada–Norton group $HN$, the Fischer group $Fi_{22}$ and their automorphism groups.}\label{fig:PQ_HN_Fi22}
\begin{subfigure}{.45\textwidth}
\centering
\caption{$\Gamma(HN)= \Gamma(HN.2)$}
\label{fig:PQ_HN}
\begin{tikzpicture}
\node[label=north:{$2$}] at (0,4) (2){};
\node[label=north:{$3$}] at (-1.7320508, 3) (3){};
\node[label=north:{$5$}] at (1.7320508, 3) (5){};
\node[label=south:{$7$}] at (-1.7320508, 1) (7){};
\node[label=south:{$11$}] at (1.7320508, 1) (11){};
\node[label=south:{$19$}] at (0,0) (19){};
\foreach \p in {2,3,5,7,11,19}{
\draw[fill=black] (\p) circle (.075cm);
}
\draw (2)--(3);
\draw (2)--(5);
\draw (2)--(7);
\draw (2)--(11);
\draw (3)--(5);
\draw (3)--(7);
\draw (5)--(7);
\end{tikzpicture}
\end{subfigure}
\begin{subfigure}{.45\textwidth}
\centering
\caption{$\Gamma(Fi_{22})= \Gamma(Fi_{22}.2)$}
\label{fig:PQ_Fi22}
\begin{tikzpicture}
\node[label=north:{$2$}] at (0,4) (2){};
\node[label=north:{$3$}] at (-1.7320508, 3) (3){};
\node[label=north:{$5$}] at (1.7320508, 3) (5){};
\node[label=south:{$7$}] at (-1.7320508, 1) (7){};
\node[label=south:{$11$}] at (1.7320508, 1) (11){};
\node[label=south:{$13$}] at (0,0) (13){};
\foreach \p in {2,3,5,7,11,13}{
\draw[fill=black] (\p) circle (.075cm);
}
\draw (2)--(3);
\draw (2)--(5);
\draw (2)--(7);
\draw (2)--(11);
\draw (3)--(5);
\draw (3)--(7);
\end{tikzpicture}
\end{subfigure}
\end{figure}
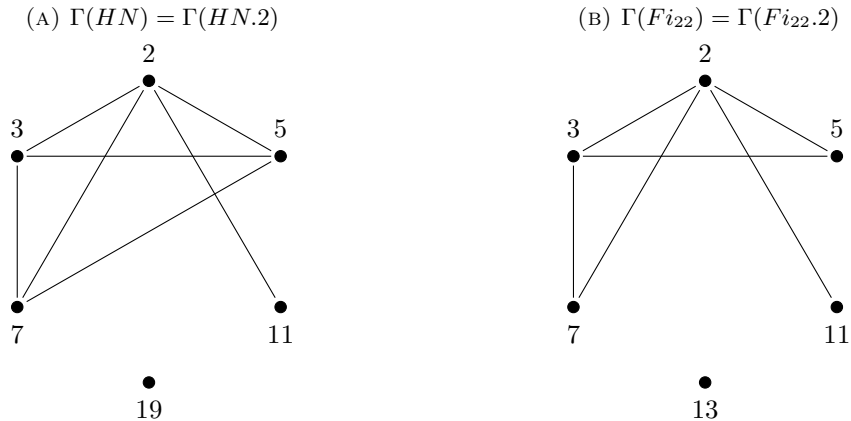

\begin{proof}[Proof of Theorem~\ref{th:Fi22HN}]
The outer automorphism groups of $Fi_{22}$ and $HN$ are both cyclic of order $2$. The prime graphs of both groups coincide with the prime graph of the corresponding automorphism group and are given in Figure~\ref{fig:PQ_HN_Fi22}. It can be read off from the ATLAS \cite{ATLAS} that $\text{Aut}(Fi_{22}) = Fi_{22}.2$ has cyclic Sylow $p$-subgroup of order $p$ such that there is only one class of $p$-elements in $\text{Aut}(Fi_{22})$ for $p \in \{7,11,13 \}$. Likewise, $\text{Aut}(HN) = HN.2$ has cyclic Sylow $p$-subgroup of order $p$ such that there is only one class of $p$-elements in $\text{Aut}(HN)$ for $p \in \{7,11,19\}$. For both groups and all primes mentioned the corresponding Brauer tree of the principal block is a line. This fact can be found in \cite{HissLux} or can be derived from the \textsf{GAP} CharacterTableLibarary \cite{CTblLib}. Hence by Theorem~\ref{prop:Brauer_Line} and the prime graphs of the groups in Figure~\ref{fig:PQ_HN_Fi22} it follows that the Prime Graph Question has a positive answer for any almost simple group with socle $Fi_{22}$ or $HN$.
\end{proof}

\begin{remark}
\begin{enumerate}[label=(\alph*)]
\item The Prime Graph Question for the sporadic simple group $Fi_{22}$ and its automorphism group was also settled by Verbeken in \cite{Brecht} using the HeLP-method and, to handle units of order 33, \cite[Theorem 1.2]{MargolisConway}, also a main ingredient in the proof of Theorem~\ref{prop:Brauer_Line}. Using similar arguments he also answered the question for $HN$ and its automorphism group. 
\item Concerning sporadic simple groups we note that Theorem~\ref{prop:Brauer_Line} suffices also to verify the Prime Graph Question for the following groups: The Mathieu group of degree 12, the second Janko group, the Higman-Sims group and the Suzuki group. This confirms results of Bovdi, Konovalov et al.
\end{enumerate}
\end{remark}

\noindent
\textbf{Acknowledgement:} We thank Jürgen Müller for answering our questions about Brauer Trees. We also thank the referee who helped to improve the readability of the paper.

\bibliographystyle{amsalpha}
\bibliography{brauer_line}

\end{document}